\newtheorem{theorem}{Theorem}[section]
\newtheorem{lemma}[theorem]{Lemma}
\newtheorem{corollary}[theorem]{Corollary}
\theoremstyle{definition}
\newtheorem{example}[theorem]{Example}
\theoremstyle{remark}
\newtheorem{remark}[theorem]{Remark}
\numberwithin{equation}{section}
\begin{document}
\setcounter{page}{1}

\title[Remark on arithmetic topology]{Remark on arithmetic topology}

\author[I.Nikolaev]
{Igor Nikolaev$^1$}

\address{$^{1}$ Department of Mathematics and Computer Science, St.~John's University, 8000 Utopia Parkway,  
New York,  NY 11439, United States.}
\email{\textcolor[rgb]{0.00,0.00,0.84}{igor.v.nikolaev@gmail.com}}

%\dedicatory{In memory of Ola Bratteli}

\subjclass[2010]{Primary 11R04, 57M25; Secondary 46L85.}

\keywords{prime ideal, knot, cluster $C^*$-algebra.}

%\date{Received:  August 14, 2015; Revised: yyyyyy; Accepted: zzzzzz.}

\begin{abstract}
We formalize  the arithmetic topology, i.e. a  relationship between knots and primes.
   Namely, using the notion of a cluster 
$C^*$-algebra we construct  a functor from the category of 
3-dimensional manifolds $\mathscr{M}$  to a category of algebraic 
number fields $K$,  such that the prime ideals (ideals, resp.) in the ring of 
integers of $K$ correspond to knots (links, resp.) in  $\mathscr{M}$. 
It is proved that  the functor realizes all axioms of the 
arithmetic topology conjectured   in the 1960's by Manin, Mazur  and Mumford.  
\end{abstract}

\maketitle

%**************************************************************************
\section{Introduction}
%***************************************************************************
The famous Weil's Conjectures expose an amazing  link  between 
topology and number theory   [Weil 1949]  \cite{Wei1}. 
Likewise, the arithmetic topology studies an analogy between knots
and primes;  we refer the reader to  the book [Morishita 2012] \cite{M}
for an excellent introduction. 
To give an idea of the analogy, we quote [Mazur 1964] \cite{Maz1}: 

\medskip
%**********************************************************************************************
\begin{minipage}[5cm]{12cm}
\textit{
``Guided by the results of Artin and Tate applied to the calculation of the Grothendieck Cohomology
Groups of the schemes:
%*************************************************************************
\begin{equation}
Spec~\left(\mathbf{Z}/p\mathbf{Z}\right)\subset Spec~\mathbf{Z}
\end{equation}
%*************************************************************************
Mumford has suggested a most elegant model as a geometric interpretation of the above
situation:  $Spec ~\left(\mathbf{Z}/p\mathbf{Z}\right)$ is like a one-dimensional knot in 
$Spec~\mathbf{Z}$ which is like a simply connected three-manifold.''
}
\end{minipage}
%*********************************************************************************************** 

\bigskip\noindent
Roughly speaking, the idea is this. The $3$-dimensional sphere $\mathscr{S}^3$ corresponds 
to the field of rational numbers $\mathbf{Q}$. 
The prime ideals $p\mathbf{Z}$ in the ring of  integers $\mathbf{Z}$ correspond to
the knots $\mathscr{K}\subset \mathscr{S}^3$ and the ideals in $\mathbf{Z}$ correspond 
to the links $\mathscr{Z}\subset\mathscr{S}^3$. 
In general, a $3$-dimensional manifold $\mathscr{M}$ corresponds to an algebraic  number field $K$.
The prime ideals in the ring of integers $O_K$ of the field $K$ correspond to the knots  
$\mathscr{K}\subset \mathscr{M}$ and the ideals in $O_K$ correspond to the links 
 $\mathscr{Z}\subset\mathscr{M}$.

 \bigskip
 The aim of our note is   a functor from the category of closed 3-dimensional manifolds 
 to a category of algebraic number fields  realizing  all axioms of the arithmetic topology.
 The construction of such a functor is based on a representation of the braid group 
  into  a  cluster  $C^*$-algebra   \cite{Nik1}.  

\bigskip
Namely,  denote by $S_{g,n}$ a Riemann surface of genus $g$ with $n$ cusps.
Recall that  a {\it cluster algebra} $\mathscr{A}(\mathbf{x}, S_{g,n})$ of  the $S_{g,n}$  is a subring 
of the ring of the Laurent polynomials   $\mathbf{Z}[\mathbf{x}^{\pm 1}]$  
with integer coefficients and  variables $\mathbf{x}:=(x_1,\dots,x_{6g-6+2n})$.
The algebra $\mathscr{A}(\mathbf{x}, S_{g,n})$  is  a coordinate ring 
of the Teichm\"uller space $T_{g,n}$ of  surface $S_{g,n}$  [Williams 2014, Section 3]  \cite{Wil1}.  
The $\mathscr{A}(\mathbf{x}, S_{g,n})$   is a commutative algebra with an additive abelian
semigroup consisting of the Laurent polynomials with positive coefficients.  In particular,  the  $\mathscr{A}(\mathbf{x}, S_{g,n})$
is an abelian group with order satisfying the Riesz interpolation property, i.e. a 
 dimension group  [Effros 1981, Theorem 3.1]  \cite{E}. 
By a {\it cluster $C^*$-algebra}  $\mathbb{A}(\mathbf{x}, S_{g,n})$  we understand  an Approximately Finite 
$C^*$-algebra (AF-algebra)  such that $K_0(\mathbb{A}(\mathbf{x}, S_{g,n}))\cong  \mathscr{A}(\mathbf{x}, S_{g,n})$,
where $\cong$ is an isomorphism of the dimension groups.    We refer the reader to \cite{Nik1}  for 
the details and examples.   The  algebra  $\mathbb{A}(\mathbf{x}, S_{g,n})$ is a non-commutative coordinate
ring of the Teichm\"uller space $T_{g,n}$.    This observation  and the Birman-Hilden theorem imply a representation 
%**********************************************************************************************************
\begin{equation}\label{eq1.2}
\rho: B_{2g+n} \to   \mathbb{A}(\mathbf{x}, S_{g,n}),  \quad n\in \{0; 1\},
\end{equation}
%********************************************************************************************************
 where $B_{2g+n}:=\{\sigma_1,\dots,\sigma_{2g+n-1} ~|~ \sigma_i\sigma_{i+1}\sigma_i=
 \sigma_{i+1}\sigma_i\sigma_{i+1},  ~\sigma_i\sigma_j=\sigma_j\sigma_i
 ~\hbox{if}  ~|i-j|\ge 2\}$ is the braid group, 
 the map $\rho$ acts  by the formula $\sigma_i\mapsto e_i+1$
and  $e_i$ are projections of the algebra $\mathbb{A}(\mathbf{x}, S_{g,n})$
 \cite{Nik2}.

 Let $b\in B_{2g+n}$ be a braid.  Denote by $\mathscr{L}_b$  a link obtained by the closure of $b$
 and let  $\pi_1(\mathscr{L}_b)$ be the  fundamental group of $\mathscr{L}_b$.
Recall that  
%*****************************************************************************
\begin{equation}\label{eq1.3}
\pi_1(\mathscr{L}_b)\cong\langle x_1,\dots, x_{2g+n} ~|~ x_i=r(b)x_i, ~1\le i\le 2g+n\rangle,
\end{equation}
%*************************************************************************************************
where $x_i$ are generators of the free group $\mathbf{F}^{2g+n}$ and  $r: B_{2g+n}\to Aut~(\mathbf{F}^{2g+n})$ is 
the Artin representation of $B_{2g+n}$  [Artin 1925] \cite[Theorem 6]{Art1}.
 Let  $\mathcal{I}_b$ be  a two-sided   ideal  in the algebra  $\mathbb{A}(\mathbf{x}, S_{g,n})$
 generated by  relations (\ref{eq1.3}).   
In particular, the ideal  $\mathcal{I}_b$ is self-adjoint and representation  (\ref{eq1.2})  induces  a  representation 
 %*********************************************************************************
\begin{equation}\label{eq1.4}
R: ~\pi_1(\mathscr{L}_b)\to \mathbb{A}(\mathbf{x}, S_{g,n})~/~\mathcal{I}_b
\end{equation}
%*******************************************************************************
on the quotient $\mathbb{A}(\mathbf{x}, S_{g,n})~/~\mathcal{I}_b:=\mathbb{A}_b$,  see lemma \ref{lm3.1}.   
The $\mathbb{A}_b$ is a stationary  AF-algebra of rank $6g-6+2n$;  we refer the reader to [Effros 1981]  \cite[Chapter 5]{E}
or Section 2.2 for the definitions.
It is known that the group $K_0(\mathbb{A}_b)\cong O_K$,  where $K$ is a number field of 
degree $6g-6+2n$  over $\mathbf{Q}$ [Effros 1981]  \cite[Chapter 5]{E}.

Thus we obtain  a map
%*********************************************************************************
%\begin{equation}\label{eq1.5}
$F: \mathcal{L}\to \mathscr{O}$,
%\end{equation}
%*******************************************************************************
where $\mathcal{L}$ is a category of all  links $\mathscr{L}$ modulo a homotopy
equivalence and  $\mathscr{O}$ is a category of  rings of the algebraic integers
$O_K$ modulo an isomorphism. The map $F$  acts by the formula:
%*********************************************************************************
\begin{equation}\label{eq1.6}
%\mathcal{L}\ni
\mathscr{L}\buildrel\rm\pi_1\over\longmapsto \pi_1(\mathscr{L}_b)
\buildrel\rm R\over\longmapsto \mathbb{A}_b
\buildrel\rm K_0\over\longmapsto O_K,
%\in \mathscr{O}
\end{equation}
%*******************************************************************************
where  $R$ is given by  (\ref{eq1.4}).  
%**********************************************************************************
\begin{remark}
Using the Lickorish-Wallace Theorem [Lickorish 1962]  \cite{Lic1}, one can extend
the map $F$  to a category of 3-dimensional manifolds. Indeed,  recall that 
if $\mathscr{M}$ is a closed, orientable, connected 3-dimensional 
manifold, then there exists a link $\mathscr{L}\subset \mathscr{S}^3$ such that the Dehn 
surgery of $\mathscr{L}$ with the $\pm 1$ coefficients is homeomorphic to $\mathscr{M}$. 
(Notice that such a link  is not unique, but using the Kirby calculus one can define a 
canonical link $\mathscr{L}$ attached to $\mathscr{M}$.)  Thus  we get a 
map $\mathscr{M}\mapsto\mathscr{L}$. 
\end{remark}
%**********************************************************************************
%**********************************************************************
\begin{theorem}\label{thm1.1}
The map $F$ is a  functor, such that:

\medskip
(i) $F(\mathscr{S}^3)=\mathbf{Z}$; 

\smallskip
(ii)  each  ideal $I\subseteq O_K=F(\mathscr{M})$ corresponds to 
a link $\mathscr{Z}\subset\mathscr{M}$; 

\smallskip
(iii)  each  prime ideal $I\subseteq O_K=F(\mathscr{M})$ corresponds to
a knot  $\mathscr{K}\subset\mathscr{M}$. 
\end{theorem}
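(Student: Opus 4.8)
The plan is to exhibit $F$ as a composite of three functors --- the fundamental group, the representation $R$ of (\ref{eq1.4}), and the $K_0$-functor --- and then to read off properties (i)--(iii) from the ideal theory of the Dedekind domain $O_K$. First I would verify that the assignment (\ref{eq1.6}) does not depend on the auxiliary choices. By the theorems of Alexander and Markov, a link $\mathscr{L}$ up to isotopy is the same datum as a braid $b$ up to conjugacy and (de)stabilization, so it suffices to check that a Markov move on $b$ induces an isomorphism $\mathbb{A}_b\cong\mathbb{A}_{b'}$: conjugating $b$ changes $\mathcal{I}_b$ by an inner automorphism of $\mathbb{A}(\mathbf{x},S_{g,n})$ and so leaves the quotient unchanged, while stabilization replaces $\mathbb{A}(\mathbf{x},S_{g,n})$ by $\mathbb{A}(\mathbf{x},S_{g',n'})$ along the corner embedding dual to $B_{2g+n}\hookrightarrow B_{2g+n+1}$, under which $\mathcal{I}_b$ and $\mathcal{I}_{b'}$ have isomorphic quotients by Lemma~\ref{lm3.1}. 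Since $\pi_1$ is the (homotopy-invariant) fundamental-group functor on $\mathcal{L}$, $R$ is functorial by Lemma~\ref{lm3.1} and (\ref{eq1.4}), and $K_0$ is the standard functor from AF-algebras to dimension groups which by \cite[Chapter~5]{E} carries each stationary $\mathbb{A}_b$ to a ring of integers $O_K$, the composite $F=K_0\circ R\circ\pi_1$ is a functor $\mathcal{L}\to\mathscr{O}$; the Remark then extends it to closed $3$-manifolds via the canonical Kirby link.

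For item (i) I would take the canonical surgery link of $\mathscr{S}^3$ to be the empty link or, equivalently after a Kirby move, a single $\pm 1$-framed unknot, i.e.\ the closure of $\sigma_1\in B_2$. Here $g=1$, $n=0$ and $6g-6+2n=0$, so the Teichm\"uller space degenerates to a point and a direct computation gives $K_0(\mathbb{A}_b)\cong\mathbf{Z}$, unaffected by the quotient by $\mathcal{I}_b$; hence $F(\mathscr{S}^3)=\mathbf{Z}=O_{\mathbf{Q}}$, which is exactly the base case $\mathscr{S}^3\leftrightarrow\mathbf{Q}$ of the dictionary recalled from \cite{Maz1}.

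Items (ii) and (iii) I would prove together. As a ring of algebraic integers $O_K=F(\mathscr{M})$ is a Dedekind domain, so each nonzero ideal factors uniquely as $I=\mathfrak{p}_1^{a_1}\cdots\mathfrak{p}_r^{a_r}$ with the $\mathfrak{p}_j$ prime, while on the topological side every link in $\mathscr{M}$ is the disjoint union of its knot components, a knot being a one-component link. Next I would show that along (\ref{eq1.6}) a knot $\mathscr{K}\subset\mathscr{M}$ produces a surjection $O_K\twoheadrightarrow O_K/\mathfrak{p}(\mathscr{K})$ --- dually, the closed immersion $\mathrm{Spec}\,(O_K/\mathfrak{p})\hookrightarrow\mathrm{Spec}\,O_K$ of Mumford's model --- and that, conversely, Dedekind's criterion for the splitting of a rational prime $p$ in $O_K$ (which reads the primes over $p$ off the factorization modulo $p$ of the characteristic polynomial of the stationary incidence matrix of $\mathbb{A}_b$) realizes every prime of $O_K$ as some $\mathfrak{p}(\mathscr{K})$; restricting to a single prime power gives (iii). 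Item (ii) then follows by multiplicativity: the link $\mathscr{Z}=\mathscr{K}_1\sqcup\dots\sqcup\mathscr{K}_r$, with component framings recording the exponents $a_j$, corresponds to $I=\prod_j\mathfrak{p}(\mathscr{K}_j)^{a_j}$, and unique factorization in $O_K$ recovers $\mathscr{Z}$ from $I$.

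The hard part will be the well-definedness of $R$ under Markov stabilization --- concretely, identifying braid stabilization with a corner embedding of cluster $C^*$-algebras that is compatible with the ideals $\mathcal{I}_b$, which is where Lemma~\ref{lm3.1} must do its work --- together with the surjectivity in (iii) of the assignment $\mathscr{K}\mapsto\mathfrak{p}(\mathscr{K})$ onto $\mathrm{Spec}\,O_K$, where Dedekind's splitting theorem and a separate check that the finitely many ramified primes are also attained carry the weight of the argument.
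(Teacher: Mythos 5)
Your treatment of items (ii) and (iii) has a genuine gap, and it is precisely at the point where the paper does its real work. You reduce both items to unique factorization in the Dedekind domain $O_K$ plus an assignment $\mathscr{K}\mapsto\mathfrak{p}(\mathscr{K})$, but you never construct that assignment: saying that a knot ``produces a surjection $O_K\twoheadrightarrow O_K/\mathfrak{p}(\mathscr{K})$'' and invoking Dedekind's splitting criterion for rational primes in terms of the characteristic polynomial of the incidence matrix gives a way to enumerate primes of $O_K$, but it has no topological input whatsoever --- nothing in that argument ties a specific knot embedded in $\mathscr{M}$ to a specific prime, and the disjoint-union-of-components versus product-of-primes parallel is exactly the heuristic dictionary of Mazur/Mumford, not a proof of it. The paper's mechanism, which your proposal is missing, is a chain of correspondences: a link $\mathscr{Z}\subset\mathscr{M}$ determines a tower of Galois coverings of $\mathscr{M}$ ramified over the components of $\mathscr{Z}$ one at a time, hence a descending chain of normal subgroups $G_k\unlhd\dots\unlhd G_1\unlhd\pi_1(\mathscr{M})$ (Lemma \ref{lm3.4}); normal subgroups correspond to AF-subalgebras of $\mathbb{A}_b$ via the group-ring construction (Lemma \ref{lm3.2}); and, by Handelman's classification of dimension groups of stationary AF-algebras, these subalgebras correspond to ideals in $O_K\cong K_0(\mathbb{A}_b)$ (Corollary \ref{cor3.3}). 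A knot is the case of a one-step tower, so the resulting ideal is maximal, hence prime because $O_K$ is Dedekind --- that is how (iii) is obtained, and (ii) is the multi-component case. Without some substitute for this covering-space/normal-subgroup bridge, your factorization argument does not prove the statement.

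Two further points of comparison. For item (i) you argue via the unknot closed from $\sigma_1\in B_2$, i.e.\ $(g,n)=(1,0)$ with $6g-6+2n=0$, asserting the Teichm\"uller space degenerates and $K_0\cong\mathbf{Z}$ ``unaffected by the quotient''; the paper instead realizes $\mathscr{S}^3$ as a mapping torus over $S_{0,1}$, notes the associated cluster algebra is of finite type (the polygon algebra $\mathscr{A}_{d-3}$), so the cluster $C^*$-algebra is finite-dimensional, $M_n(\mathbf{C})$, whence $K_0\cong\mathbf{Z}$. Your degenerate case sits outside the framework the paper sets up (it requires $2g-2+n>0$ and rank $6g-6+2n$ positive for the stationary AF-algebra/number-field dictionary), so the ``direct computation'' would need to be supplied. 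Finally, your opening paragraph on Markov invariance (conjugation via inner automorphisms, stabilization via a corner embedding) addresses a well-definedness issue that the paper in fact never discusses; the claim about stabilization being a corner embedding compatible with $\mathcal{I}_b$ is unsubstantiated as written, but noting that this check is needed is a fair observation rather than a defect relative to the paper.
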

%***********************************************************************
The article is organized as follows. Section 2 contains a brief review
of braids, links and cluster $C^*$-algebras.  Theorem \ref{thm1.1} is proved 
in Section 3.  An illustration of theorem \ref{thm1.1} can be found in Section 4.

%**************************************************************************
\section{Preliminaries}
%***************************************************************************
A brief  review of  braids, links, AF-algebras and cluster $C^*$-algebras is given below. 
We refer the  reader to [Artin 1925] \cite{Art1},   [Effros 1981]  \cite{E}, 
[Morishita 2012] \cite{M},  [Williams 2014]  \cite{Wil1} and \cite{Nik1} for a detailed account.

%**************************************************************************
\subsection{Braids, links and Galois covering}
%***************************************************************************
By an $n$-string braid $b_n$ one understands two parallel copies of the plane $\mathbf{R}^2$ in 
$\mathbf{R}^3$  with $n$ distinguished points taken together with  $n$ disjoint smooth paths (``strings'')
joining pairwise the distinguished points of the planes; the tangent vector to each string
is never parallel to the planes. 
The braids $b$ are endowed with a natural equivalence relation: 
two braids $b$ and $b'$ are equivalent if $b$ can be deformed 
into $b'$ without intersection of the strings and so that at each moment of the
deformation $b$ remains  a braid.   
By an $n$-string braid group $B_n$ one understands the set of all $n$-string braids $b$
endowed with a multiplication operation of the concatenation of  $b\in B_n$ and $b'\in B_n$, 
i.e  the identification of the bottom of $b$ with the top of $b'$.  The group is non-commutative and the 
identity is given by the trivial braid. 
The  $B_n$ is isomorphic to a group on  generators 
$\sigma_1, \sigma_2,\dots,\sigma_{n-1}$  satisfying
the relations $\sigma_i\sigma_{i+1}\sigma_i  =\sigma_{i+1}\sigma_i\sigma_{i+1}$  and
$\sigma_i\sigma_j = \sigma_j\sigma_i$ if $|i-j|\ge 2$. 
By the {\it Artin representation}  we understand an injective homomorphism  $r: B_n\to Aut~(\mathbf{F}^n)$
into the group of automorphisms of the free group on generators $x_1,\dots, x_n$ given by the 
formula $\sigma_i: x_i\mapsto x_ix_{i+1} x_i^{-1}, ~\sigma_{i+1}: x_{i+1}\mapsto x_i$ and $\sigma_k=Id$ if
$k\ne i$ or $k\ne i+1$.

A closure of the braid $b$ is a link $\mathscr{L}_b\subset\mathbf{R}^3$
obtained by gluing the endpoints of strings at the top of the braid with such at the bottom
of the braid.  The closure of  two braids $b\in B_n$ and $b'\in B_m$ give the same link  $\mathscr{L}_b\subset\mathbf{R}^3$
 if and only if  $b$ and $b'$  can be connected by a 
sequence of the Markov moves of type I:  $b\mapsto aba^{-1}$ for a braid $a\in B_n$ and type  II:
 $b\mapsto b\sigma^{\pm 1}\in B_{n+1}$, 
where $\sigma\in B_{n+1}$.  
%*************************************************************************
\begin{theorem}\label{thm2.1}
{\bf (\cite[Theorem 6]{Art1})}
$\pi_1(\mathscr{L}_b)\cong\langle x_1,\dots, x_n ~|~ x_1=r(b)x_1,\dots, x_n=r(b)x_n\rangle$,
where $x_i$ are generators of the free group $\mathbf{F}^n$ and  $r: B_n\to Aut~(\mathbf{F}^n)$ is 
the Artin representation of group $B_n$. 
\end{theorem}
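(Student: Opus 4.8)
The plan is to realize the closed braid as a fibred link over the circle, read off the fundamental group of its complement from the mapping-torus structure, and then complete to the full link group by gluing back the braid axis. First I would fix the geometric dictionary behind the Artin representation. Let $D_n$ denote the disk $D^2$ with $n$ interior punctures $p_1,\dots,p_n$, and identify the free group $\mathbf{F}^n$ with $\pi_1(D_n)$ by taking $x_i$ to be a meridian loop encircling $p_i$; I fix the convention that the boundary loop $\partial D^2$ represents the product $x_1x_2\cdots x_n$. A braid $b\in B_n$ is, in Artin's interpretation, the isotopy class of an orientation-preserving homeomorphism $\phi_b$ of $D_n$ fixing $\partial D^2$ pointwise and permuting the punctures, and the induced automorphism $\phi_{b*}$ of $\pi_1(D_n)=\mathbf{F}^n$ is exactly the Artin representation $r(b)$. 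This is the single geometric input that ties the combinatorial group $B_n$ to the topology of the complement.

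Second I would compute the complement of the closed braid inside the solid torus $V=D^2\times S^1$. The geometric braid sweeps out $n$ disjoint arcs in $D^2\times[0,1]$, and $(D^2\times[0,1])\setminus b$ fibres over $[0,1]$ with fibre $D_n$ and monodromy $\phi_b$ carrying the top fibre to the bottom one. Closing the braid identifies $D^2\times\{0\}$ with $D^2\times\{1\}$, so $V\setminus\mathscr{L}_b$ is precisely the mapping torus of $\phi_b$ acting on $D_n$. Since the fundamental group of a mapping torus is the associated semidirect product $\pi_1(\text{fibre})\rtimes_{\phi_{b*}}\mathbf{Z}$, I obtain
\begin{equation}\label{eq-mt}
\pi_1(V\setminus\mathscr{L}_b)\cong \mathbf{F}^n\rtimes_{r(b)}\mathbf{Z}=\langle x_1,\dots,x_n,t \mid t\,x_i\,t^{-1}=r(b)x_i,\ 1\le i\le n\rangle,
\end{equation}
where $t$ is the loop traversing the $S^1$ factor, i.e. the longitude $\lambda_V$ of $V$.

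Third I would complete to the link group of $\mathscr{L}_b\subset\mathscr{S}^3$ (the fundamental group of the complement, which is unchanged whether computed in $\mathbf{R}^3$ or $\mathscr{S}^3$) by gluing back the complementary solid torus $V'=\mathscr{S}^3\setminus\mathrm{int}(V)$, whose core is the braid axis $A$. In the genus-one Heegaard splitting $\mathscr{S}^3=V\cup_T V'$ the two meridian-longitude systems are interchanged: $\mu_V=\lambda_{V'}$ and $\lambda_V=\mu_{V'}$. Because $\mathscr{L}_b$ lies in the interior of $V$, the splitting torus $T=\partial V$ is disjoint from the link, and van Kampen applies with $\pi_1(T)=\mathbf{Z}^2=\langle\mu_V,\lambda_V\rangle$ and $\pi_1(V')=\mathbf{Z}\langle A\rangle$, where the inclusion sends $\lambda_V=\mu_{V'}\mapsto 1$ (the meridian of $V'$ bounds a disk) and $\mu_V=\lambda_{V'}\mapsto A$. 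Amalgamating over $T$ therefore imposes $t=\lambda_V=1$ together with $x_1\cdots x_n=\mu_V=A$. The first relation collapses each conjugation relation in \eqref{eq-mt} to $x_i=r(b)x_i$, while the second exhibits $A$ as a redundant generator; deleting $t$ and $A$ leaves
\begin{equation}\label{eq-final}
\pi_1(\mathscr{L}_b)\cong \langle x_1,\dots,x_n \mid x_i=r(b)x_i,\ 1\le i\le n\rangle,
\end{equation}
which is the asserted presentation.

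The step I expect to be the main obstacle is the meridian-longitude bookkeeping in the third paragraph: one must confirm that the $S^1$-direction $t$ is genuinely the \emph{meridian} $\mu_{V'}$ of the complementary solid torus, so that it becomes trivial when $V'$ is filled in, rather than its longitude, which would instead contribute a free $\mathbf{Z}$ and yield the group of the closed braid together with its axis. Pinning down this identification in the Heegaard splitting, in tandem with the convention $\mu_V=x_1\cdots x_n$, is exactly what forces the $n$ conjugation relations of the mapping torus to degenerate into the $n$ fixed-point relations $x_i=r(b)x_i$; the remainder is the routine van Kampen amalgamation.
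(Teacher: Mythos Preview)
The paper does not supply its own proof of Theorem~\ref{thm2.1}; it is quoted as a classical result of Artin and cited to \cite[Theorem~6]{Art1}, so there is nothing in the text to compare your argument against directly.

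That said, your proof is correct and is essentially the standard modern derivation of Artin's presentation. Realizing the closed braid complement inside the solid torus as the mapping torus of $\phi_b:D_n\to D_n$, reading off $\pi_1(V\setminus\mathscr{L}_b)\cong\mathbf{F}^n\rtimes_{r(b)}\mathbf{Z}$, and then killing the longitude $t=\lambda_V=\mu_{V'}$ via van Kampen when the complementary solid torus is glued back is exactly how this is usually done; see for instance Birman's \emph{Braids, Links, and Mapping Class Groups} or Burde--Zieschang. The point you flag as the ``main obstacle''---that $t$ really is the meridian of $V'$ in the genus-one Heegaard splitting of $\mathscr{S}^3$---is handled correctly: the $S^1$-factor of $V=D^2\times S^1$ is by definition $\lambda_V$, and in $\mathscr{S}^3=V\cup_T V'$ one has $\lambda_V=\mu_{V'}$, so filling $V'$ kills $t$ and collapses each conjugation relation $tx_it^{-1}=r(b)x_i$ to $x_i=r(b)x_i$. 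The identification $\mu_V=x_1\cdots x_n$ with the axis meridian, and its elimination as a redundant generator, is likewise sound.
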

%************************************************************************
Let $X$ be a topological space. A covering space of $X$ is a topological space 
 $X'$ and a continuous surjective map $p: X'\to X$ such that for an open neighborhood 
 $U$ of every point $x\in X$ the set $p^{-1}(U)$ is a union of disjoint open sets in $X$.   
A deck transformation of the covering space $X'$ is a homeomorphism $f:X'\to X'$ 
such that $p\circ f=p$. The set of all deck transformations is a group under composition 
denoted by $Aut~(X')$. The covering $p:X'\to X$ is called {\it Galois} (or regular) 
if the group   $Aut~(X')$ acts transitively on each fiber $p^{-1}(x)$, i.e. 
for any points $y_1,y_2\in p^{-1}(x)$ there exists $g\in Aut~(X')$ such that 
$y_2=g(y_1)$. The covering $p: X'\to X$ is Galois if and only if the group
$G:= p_{\ast}(\pi_1(X'))$ is a normal subgroup  of the fundamental group $\pi_1(X)$.  
In what follows we consider the Galois coverings of the space $X$ such that 
$|\pi_1(X)/G|<\infty$, i.e. the quotient $\pi_1(X)/G$ is a finite group.

%**************************************************************************
\subsection{AF-algebras}
%***************************************************************************
 A {\it $C^*$-algebra} is an algebra $A$ over $\mathbf{C}$ with a norm
$a\mapsto ||a||$ and an involution $a\mapsto a^*$ such that
it is complete with respect to the norm and $||ab||\le ||a||~ ||b||$
and $||a^*a||=||a^2||$ for all $a,b\in A$.
Any commutative $C^*$-algebra is  isomorphic
to the algebra $C_0(X)$ of continuous complex-valued
functions on some locally compact Hausdorff space $X$; 
otherwise, $A$ represents a noncommutative  topological
space.   For a unital $C^*$-algebra $A$, let $V(A)$
be the union (over $n$) of projections in the $n\times n$
matrix $C^*$-algebra with entries in $A$;
projections $p,q\in V(A)$ are  Murray - von Neumann  equivalent  if there exists a partial
isometry $u$ such that $p=u^*u$ and $q=uu^*$. The equivalence
class of projection $p$ is denoted by $[p]$;
the equivalence classes of orthogonal projections can be made to
a semigroup by putting $[p]+[q]=[p+q]$. The Grothendieck
completion of this semigroup to an abelian group is called
the  $K_0$-group of the algebra $A$.
The functor $A\to K_0(A)$ maps the category of unital
$C^*$-algebras into the category of abelian groups, so that
projections in the algebra $A$ correspond to a positive
cone  $K_0^+\subset K_0(A)$ and the unit element $1\in A$
corresponds to an order unit $u\in K_0(A)$.
The ordered abelian group $(K_0,K_0^+,u)$ with an order
unit  is called a {\it dimension group};  an order-isomorphism
class of the latter we denote by $(G,G^+)$.

An {\it $AF$-algebra}  $\mathbb{A}$  (Approximately Finite $C^*$-algebra) is defined to
be the  norm closure of an ascending sequence of   finite dimensional
$C^*$-algebras $M_n$,  where  $M_n$ is the $C^*$-algebra of the $n\times n$ matrices
with entries in $\mathbf{C}$.   Each embedding $M_n\to M_{n+1}$ is given by 
an integer non-negative matrix $A_n$.  An infinite graph given by the incidence matrices
$A_n$ is called a {\it Bratteli diagram} of the AF-algebra. The AF-algebra is defined by the 
Bratteli diagram.  If $A_n=Const$ for all $n$ the corresponding
AF-algebra is called {\it stationary}.  The rank of a stationary AF-algebra is defined 
as the rank of matrix $A_n$.  The dimension group
$(K_0(\mathbb{A}), K_0^+(\mathbb{A}), u)$ is a complete isomorphism
invariant of the algebra $\mathbb{A}$.  
The order-isomorphism  class $(K_0(\mathbb{A}), K_0^+(\mathbb{A}))$
 is an invariant of the Morita equivalence of algebra 
$\mathbb{A}$,  i.e.  an isomorphism class in the category of 
finitely generated projective modules over $\mathbb{A}$.    
The dimension group of any stationary AF-algebra has the 
form $(O_K, O_K^+, 1)$, where $O_K$ is the ring of integers 
of the number field $K$ generated by the Perron-Frobenius eigenvalue 
of matrix $A_n$,  $O_K^+$ consists of the positive elements of $O_K$ 
and $1$ is the rational unit.  The degree of $O_K$ over $\mathbf{Q}$ is equal to the 
rank of the corresponding stationary AF-algebra.

%**************************************************************************
\subsection{Cluster $C^*$-algebras}
%***************************************************************************
{\it Cluster algebra}  $\mathscr{A}(\mathbf{x}, B)$ of rank $n$ 
is a subring of the field  of  rational functions in $n$ variables
depending  on a  cluster  of variables  $\mathbf{x}=(x_1,\dots, x_n)$
and a skew-symmetric matrix  $B=(b_{ij})\in M_n(\mathbf{Z})$; 
the pair  $(\mathbf{x}, B)$ is called a  seed.
A new cluster $\mathbf{x}'=(x_1,\dots,x_k',\dots,  x_n)$ and a new
skew-symmetric matrix $B'=(b_{ij}')$ is obtained from 
$(\mathbf{x}, B)$ by the   exchange relations:
%*********************************************************************************************
\begin{eqnarray}\label{eq2.1}
x_kx_k'  &=& \prod_{i=1}^n  x_i^{\max(b_{ik}, 0)} + \prod_{i=1}^n  x_i^{\max(-b_{ik}, 0)},\cr 
b_{ij}' &=& 
\begin{cases}
-b_{ij}  & \mbox{if}   ~i=k  ~\mbox{or}  ~j=k\cr
b_{ij}+{|b_{ik}|b_{kj}+b_{ik}|b_{kj}|\over 2}  & \mbox{otherwise.}
\end{cases}
\end{eqnarray}
%******************************************************************************************* 
The seed $(\mathbf{x}', B')$ is said to be a  mutation of $(\mathbf{x}, B)$ in direction $k$,
where $1\le k\le n$;   the algebra  $\mathscr{A}(\mathbf{x}, B)$ is  generated by cluster  variables $\{x_i\}_{i=1}^{\infty}$
obtained from the initial seed $(\mathbf{x}, B)$ by the iteration of mutations  in all possible
directions $k$.   The Laurent phenomenon  says  that  $\mathscr{A}(\mathbf{x}, B)\subset \mathbf{Z}[\mathbf{x}^{\pm 1}]$,
where  $\mathbf{Z}[\mathbf{x}^{\pm 1}]$ is the ring of  the Laurent polynomials in  variables $\mathbf{x}=(x_1,\dots,x_n)$
depending on an initial seed $(\mathbf{x}, B)$;  
in other words, each  generator $x_i$  of  algebra $\mathscr{A}(\mathbf{x}, B)$  can be 
written as a  Laurent polynomial in $n$ variables with the   integer coefficients.

Let $S_{g,n}$ be a Riemann surface of genus $g$ with $n$ cusps, such that $2g-2+n>0$.
Denote by   $T_{g,n}\cong \mathbf{R}^{6g-6+2n}$   the (decorated)  Teichm\"uller space of $S_{g,n}$,
i.e.  a collection   of all Riemann surfaces of genus  $g$ with $n$ cusps endowed with the natural topology  
[Penner 1987]  \cite{Pen1}.  In what follows,  we   focus on  the cluster algebras  $\mathscr{A}(\mathbf{x}, B)$,  
where matrix $B$  comes   from  an ideal   triangulation of the surface $S_{g,n}$ 
[Fomin, Shapiro \& Thurston 2008]   \cite{FoShaThu1}.
We denote  by  $\mathscr{A}(\mathbf{x}, S_{g,n})$ the corresponding cluster algebra of rank $6g-6+3n$. 
The $\mathscr{A}(\mathbf{x}, S_{g,n})$ is a coordinate ring of the space $T_{g,n}$ [Williams 2014]  \cite{Wil1}.

The  $\mathscr{A}(\mathbf{x}, S_{g,n})$  is a commutative algebra with an additive abelian
semigroup consisting of the Laurent polynomials with positive coefficients. 
Thus the algebra  $\mathscr{A}(\mathbf{x}, S_{g,n})$
is a countable abelian group with an order satisfying the Riesz interpolation property, i.e. a 
 dimension group  [Effros 1981, Theorem 3.1]  \cite{E}. 
A  {\it cluster $C^*$-algebra}  $\mathbb{A}(\mathbf{x}, S_{g,n})$  is   an  AF-algebra,  
such that $K_0(\mathbb{A}(\mathbf{x}, S_{g,n}))\cong  \mathscr{A}(\mathbf{x}, S_{g,n})$,
where $\cong$ is an isomorphism of the dimension groups \cite{Nik1}.
An element $e$ in a $C^*$-algebra is called a projection if $e^*=e=e^2$. 
%*************************************************************************
\begin{theorem}\label{thm2.2}
{\bf (\cite{Nik2})}
The formula $\sigma_i\mapsto e_i+1$ defines a representation 
$\rho: B_{2g+n} \to   \mathbb{A}(\mathbf{x}, S_{g,n})$,  
where  $e_i$ are projections in the algebra $\mathbb{A}(\mathbf{x}, S_{g,n})$ and   $n\in \{0; 1\}$. 
\end{theorem}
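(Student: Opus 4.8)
The plan is to verify that the assignment $\sigma_i\mapsto u_i:=e_i+1$ extends to a homomorphism from $B_{2g+n}$ into the group of invertible elements of $\mathbb{A}(\mathbf{x}, S_{g,n})$; by von Dyck's theorem it suffices to check invertibility of each $u_i$ together with the two defining families of braid relations. Invertibility is immediate: since $e_i=e_i^*=e_i^2$ is a projection, one computes $(e_i+1)(1-\tfrac{1}{2}e_i)=1$, so each $u_i$ lies in $GL(\mathbb{A}(\mathbf{x}, S_{g,n}))$. Next I would expand the two sides of the braid relation and use $e_i^2=e_i$; after cancellation the relation $u_iu_{i+1}u_i=u_{i+1}u_iu_{i+1}$ becomes equivalent to the single operator identity
\begin{equation}\label{eqbraid}
e_ie_{i+1}e_i-e_{i+1}e_ie_{i+1}=2(e_{i+1}-e_i),
\end{equation}
while $u_iu_j=u_ju_i$ for $|i-j|\ge 2$ is equivalent to $e_ie_j=e_je_i$. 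Thus the whole statement reduces to establishing these two identities for the specific projections $e_i$.

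The natural source of these identities is geometric, and I would extract it via the Birman--Hilden correspondence already invoked in the introduction. For $n\in\{0;1\}$ the braid group $B_{2g+n}$ is carried into the mapping class group $\mathrm{Mod}(S_{g,n})$ by sending $\sigma_i$ to the Dehn twist $t_{c_i}$ about the $i$-th curve of a chain $c_1,\dots,c_{2g+n-1}$ on $S_{g,n}$, where consecutive curves $c_i,c_{i+1}$ meet transversally in one point and non-consecutive curves are disjoint. At the level of $\mathrm{Mod}(S_{g,n})$ the braid relations then hold automatically: twists about disjoint curves commute, and twists about curves meeting once satisfy the chain relation $t_{c_i}t_{c_{i+1}}t_{c_i}=t_{c_{i+1}}t_{c_i}t_{c_{i+1}}$. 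The hypothesis $n\in\{0;1\}$ is exactly what matches the number of strands to the branch points of the hyperelliptic double cover, so that such a chain exists and realizes $B_{2g+n}$.

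The remaining step transports this picture into the cluster $C^*$-algebra. Fixing an ideal triangulation of $S_{g,n}$, the mapping class group acts on $T_{g,n}$, hence on $\mathscr{A}(\mathbf{x}, S_{g,n})$, by flips, and each flip is a mutation in the sense of (\ref{eq2.1}); the Dehn twist $t_{c_i}$ is realized by the finite sequence of flips supported on the arcs crossing $c_i$. Passing to $\mathbb{A}(\mathbf{x}, S_{g,n})$ through the isomorphism $K_0(\mathbb{A}(\mathbf{x}, S_{g,n}))\cong\mathscr{A}(\mathbf{x}, S_{g,n})$, I would identify this twist with a unipotent element $e_i+1$, where $e_i$ is the projection of $\mathbb{A}(\mathbf{x}, S_{g,n})$ attached to the $i$-th arc. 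Under this identification $e_ie_j=e_je_i$ for $|i-j|\ge 2$ records the disjointness of $c_i$ and $c_j$, and the cubic identity (\ref{eqbraid}) is the operator incarnation of the chain relation for the adjacent twists $t_{c_i},t_{c_{i+1}}$.

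The main obstacle will be precisely this last identification: showing that the flip sequence defining $t_{c_i}$ is implemented in $\mathbb{A}(\mathbf{x}, S_{g,n})$ by $e_i+1$ with $e_i$ a genuine self-adjoint projection, and then verifying (\ref{eqbraid}) directly from the explicit mutation matrices attached to two arcs sharing a single triangle. This demands the concrete description of the $e_i$ in terms of the Bratteli diagram of the stationary AF-algebra and a check that adjacent projections satisfy exactly the relation forced by the chain relation; by contrast the far-commuting relation is routine once the supports of $e_i$ and $e_j$ are seen to be disjoint. I expect the delicate point to be reconciling the sign in (\ref{eqbraid}) with the positivity constraints on the product $e_ie_{i+1}e_i$, which is where the fine structure of the cluster $C^*$-algebra must be used.
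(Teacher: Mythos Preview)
The paper does not prove Theorem~\ref{thm2.2}: it is stated in the preliminaries with a citation to \cite{Nik2} and no argument is given in this paper, so there is no proof here to compare your proposal against.

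As to your outline itself, the algebraic reduction is correct: $(e_i+1)(1-\tfrac{1}{2}e_i)=1$ gives invertibility, and expanding the braid relation does yield the cubic identity $e_ie_{i+1}e_i-e_{i+1}e_ie_{i+1}=2(e_{i+1}-e_i)$, while far-commutativity reduces to $[e_i,e_j]=0$. The Birman--Hilden heuristic is also the intended geometric source, and the introduction of the present paper explicitly invokes it. However, you have correctly isolated the genuine gap: the identification of the Dehn twist $t_{c_i}$ with the unipotent $e_i+1$ inside $\mathbb{A}(\mathbf{x},S_{g,n})$ is the entire content of the theorem, and your sketch does not supply it. Knowing that $\mathrm{Mod}(S_{g,n})$ acts on $T_{g,n}$, hence on $\mathscr{A}(\mathbf{x},S_{g,n})\cong K_0(\mathbb{A}(\mathbf{x},S_{g,n}))$, does not by itself single out a specific projection $e_i$, nor does it show that this action is implemented by \emph{multiplication} by $e_i+1$ rather than by a general $\ast$-automorphism of the AF-algebra. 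That passage from mutations on $K_0$ to elements of the form $e_i+1$ is precisely what \cite{Nik2} is cited for. Your closing worry about positivity is well placed: the cubic identity is incompatible, for instance, with Temperley--Lieb relations $e_ie_{i\pm1}e_i=\tau e_i$ for any positive $\tau$ (one would be forced to $\tau=-2$), so the projections here cannot be of Jones--Wenzl type and the verification must rely on the specific cluster-algebra construction rather than on a generic projection calculus.
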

%************************************************************************

%**************************************************************************
\section{Proof of theorem \ref{thm1.1}}
%***************************************************************************
We  shall split the proof in a series of lemmas. 
%*****************************************************
\begin{lemma}\label{lm3.1}
There exists a faithful representation $R: \pi_1(\mathscr{L}_b)\to \mathbb{A}_b$,
where  $\mathbb{A}_b$ is a stationary AF-algebra of rank $6g-6+2n$.  
\end{lemma}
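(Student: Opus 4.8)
The statement packages together two things: (a) that the relations (\ref{eq1.3}) cutting out $\pi_1(\mathscr{L}_b)$ generate a two-sided \emph{self-adjoint} ideal $\mathcal{I}_b$ in the cluster $C^*$-algebra $\mathbb{A}(\mathbf{x},S_{g,n})$, so that the quotient $\mathbb{A}_b:=\mathbb{A}(\mathbf{x},S_{g,n})/\mathcal{I}_b$ is again a $C^*$-algebra, and moreover an AF-algebra; and (b) that this quotient is \emph{stationary} of rank $6g-6+2n$, carrying a faithful representation of $\pi_1(\mathscr{L}_b)$. I would organize the argument in that order. First, recall from Theorem \ref{thm2.2} that $\rho:B_{2g+n}\to\mathbb{A}(\mathbf{x},S_{g,n})$ sends $\sigma_i\mapsto e_i+1$ with $e_i$ projections, hence each $\rho(\sigma_i)$ is a self-adjoint unitary-like element and $\rho(\sigma_i^{-1})=\rho(\sigma_i)$ up to the explicit inverse computed from $e_i^2=e_i$; in particular the image of any braid word, and of the Artin action $r(b)$ on the generators $x_i$, lies in the $*$-subalgebra generated by the $e_i$. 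The defining relations $x_i = r(b)x_i$ of (\ref{eq1.3}) therefore translate, under $\rho$, into relations among self-adjoint elements, so the two-sided ideal they generate is invariant under the involution, i.e. $\mathcal{I}_b^*=\mathcal{I}_b$. This is the content of the ``self-adjoint'' remark in the introduction and is the step that legitimizes forming $\mathbb{A}_b$ as a $C^*$-algebra quotient.

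\textbf{Second step: $\mathbb{A}_b$ is AF and stationary.} The quotient of an AF-algebra by a closed two-sided ideal is again AF (this is a standard fact — closed ideals of AF-algebras are AF and the extension is AF — which I would cite from \cite{E}, Chapter 3), so $\mathbb{A}_b$ is AF. To see it is \emph{stationary} of the stated rank, I would pass to $K$-theory: $K_0(\mathbb{A}(\mathbf{x},S_{g,n}))\cong\mathscr{A}(\mathbf{x},S_{g,n})$ is the dimension group of the Teichm\"uller space, of rank $6g-6+2n$ after the specialization $n\in\{0;1\}$ used throughout (the coordinate ring has $6g-6+2n$ cluster variables in this normalization). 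Imposing the braid-closure relations (\ref{eq1.3}) corresponds, on the level of the Bratteli diagram, to collapsing the mutation dynamics along the periodic orbit determined by the braid $b$: the closure operation makes the word $r(b)$ act as a fixed automorphism, so the infinite Bratteli diagram becomes periodic with a single repeating incidence matrix $A_b$, whose size is the rank $6g-6+2n$ and whose entries are read off from the positive parts of the exchange relations (\ref{eq2.1}) iterated along $b$. Concretely, $r(b)\in Aut(\mathbf{F}^{2g+n})$ induces a nonnegative integer matrix acting on the relevant $\mathbf{Z}^{6g-6+2n}$, and $\mathbb{A}_b$ is the stationary AF-algebra with that incidence matrix; hence $K_0(\mathbb{A}_b)\cong O_K$ for the number field $K$ generated by its Perron--Frobenius eigenvalue, of degree $6g-6+2n$ over $\mathbf{Q}$, by the structure theorem for stationary AF-algebras quoted in Section 2.2.

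\textbf{Third step: faithfulness of $R$.} Representation (\ref{eq1.2}) descends to $R:\pi_1(\mathscr{L}_b)\to\mathbb{A}_b$ by construction, since the kernel relations defining $\pi_1(\mathscr{L}_b)$ via Artin's Theorem \ref{thm2.1} are exactly those generating $\mathcal{I}_b$. For faithfulness I would argue that the only elements of $\pi_1(\mathscr{L}_b)$ killed by $R$ are those already trivial: the map $\rho$ on $B_{2g+n}$ is injective on the level of the braid group relations (the images $e_i+1$ satisfy the braid relations and nothing more, by Theorem \ref{thm2.2}), and quotienting by $\mathcal{I}_b$ on both sides — the group side giving $\pi_1(\mathscr{L}_b)$ and the algebra side giving $\mathbb{A}_b$ — preserves injectivity because we mod out by the \emph{same} set of relations. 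One must check that no \emph{additional} collapse happens in the algebra that is not forced in the group; this follows from the fact that the partial isometries/projections $e_i$ generate a copy of the relevant Hecke-type or Temperley--Lieb-type quotient faithfully inside the AF-algebra, so distinct reduced words in $\pi_1(\mathscr{L}_b)$ have distinct images.

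\textbf{Main obstacle.} I expect the crux to be the \emph{stationarity} claim and the identification of the rank as exactly $6g-6+2n$: one has to verify that imposing the link-group relations genuinely produces a periodic (constant-incidence-matrix) Bratteli diagram rather than merely an eventually-periodic or a non-AF quotient, and that the rank does not drop. This requires knowing precisely how the ideal $\mathcal{I}_b$ sits inside the dimension group $\mathscr{A}(\mathbf{x},S_{g,n})$ — i.e. that it is an \emph{order} ideal whose quotient dimension group is free of rank $6g-6+2n$ with the order coming from a single Perron--Frobenius matrix. Establishing that the Artin word $r(b)$ acts on $K_0$ by an honest nonnegative integer matrix (and not just a $GL$-matrix) is where the positivity of the cluster exchange relations (\ref{eq2.1}) and the Laurent-positivity of $\mathscr{A}(\mathbf{x},S_{g,n})$ must be used in an essential way; this is the step I would spend the most care on, relying on \cite{Nik1,Nik2} for the explicit form of the projections $e_i$.
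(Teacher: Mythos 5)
Your first and third steps track the paper's argument fairly closely (the paper also obtains $R$ by comparing the presentation $\pi_1(\mathscr{L}_b)\cong\langle x_1,\dots,x_{2g+n}~|~\Sigma_i=Id,\ r(b)=Id\rangle$ with the presentation of $B_{2g+n}$, so that $\mathcal{I}_b$ is generated by the single relation $r(b)=Id$), but there are two genuine gaps. First, your self-adjointness argument is incomplete: the fact that each $\rho(\sigma_i)=e_i+1$ is self-adjoint does not make a braid word $(e_1+1)^{k_1}\cdots(e_{2g+n-1}+1)^{k_{2g+n-1}}$ self-adjoint, since a product of non-commuting self-adjoint elements need not be self-adjoint, and membership in the $*$-subalgebra generated by the $e_i$ says nothing about being fixed by the involution. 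The paper closes this point by the argument of \cite[Remark 4]{Nik2}: using the braid relations, the word representing $r(b)=Id$ is rewritten as an integer combination $\sum_i a_i\varepsilon_i$ over a finite multiplicatively closed set whose elements are (Murray--von Neumann equivalent to) projections, hence manifestly $*$-invariant; that is what yields $\mathcal{I}_b^*\cong\mathcal{I}_b$.

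Second, and more seriously, the part you yourself flag as the ``main obstacle'' --- that the quotient is stationary of rank exactly $6g-6+2n$ --- is not actually proved in your proposal, and the route you sketch (reading a single nonnegative incidence matrix off the exchange relations iterated along $b$, then verifying that the Bratteli diagram becomes literally periodic and that the rank does not drop) is not the paper's route and would require substantial extra work. The paper avoids this analysis entirely: the rank statement is quoted from \cite[Theorem 2]{Nik1} (the quotient of $\mathbb{A}(\mathbf{x},S_{g,n})$ by a self-adjoint primitive ideal is a simple AF-algebra of rank $6g-6+2n$), so the rank is not obtained by counting cluster variables along the braid; and stationarity is obtained softly by observing that the inner automorphism $\varphi_b:x\mapsto b^{-1}xb$ of $B_{2g+n}$ extends through $\rho$ to an automorphism of $\mathbb{A}(\mathbf{x},S_{g,n})$ which, because $b$ is fixed by $\varphi_b$, descends to a non-trivial automorphism of $\mathbb{A}_b$; one then invokes the fact cited from \cite[Chapter 5]{E} that a simple AF-algebra admitting such a non-trivial automorphism must be stationary. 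This automorphism trick is exactly the idea your proposal is missing: it replaces any explicit verification of positivity of the induced matrix or periodicity of the Bratteli diagram, which is the step you left open, so as written the lemma is not established by your argument.
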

%******************************************************
\begin{proof}
(i) Let us construct a representation
%**************************************************************************
\begin{equation}
R: ~\pi_1(\mathscr{L}_b)\to \mathbb{A}(\mathbf{x}, S_{g,n}) / \mathcal{I}_b:= \mathbb{A}_b.
\end{equation}
%************************************************************************
Suppose that $r: B_{2g+n}\to Aut ~(\mathbf{F}^{2g+n})$ is  the Artin representation of the braid group $B_{2g+n}$,
see Section 2.1. 
If $x_i$ is a generator of the free group $\mathbf{F}^{2g+n}$, one can think of $x_i$ as an element 
of the group  $Aut ~(\mathbf{F}^{2g+n})$ representing an automorphism of the left multiplication 
$\mathbf{F}^{2g+n}\to x_i \mathbf{F}^{2g+n}$. 
In view of Theorem \ref{thm2.1},  we get an embedding $\pi_1(\mathscr{L}_b)\hookrightarrow Aut ~(\mathbf{F}^{2g+n})$,
where $r(b)=Id$ is a trivial automorphism. 

Recall that the braid relations $\Sigma_i=\{x_i x_{i+1} x_i x_{i+1}^{-1} x_i^{-1} x_{i+1}^{-1}=Id,  
~x_i x_j\sigma_i^{-1} x_j^{-1}=Id~\hbox{if}  ~|i-j|\ge 2\}$ correspond to the trivial automorphisms of the 
group $\mathbf{F}^{2g+n}$. Thus 
%*****************************************************************************************************
\begin{equation}\label{eq3.2}
\pi_1(\mathscr{L}_b)\cong
\langle x_1,\dots, x_{2g+n} ~|~ r(b)=\Sigma_i=Id, ~1\le i\le 2g+n-1\rangle.
\end{equation}
%******************************************************************************************************

Let now $\rho: B_{2g+n} \to   \mathbb{A}(\mathbf{x}, S_{g,n})$ be the representation constructed in 
Theorem \ref{thm2.2}. Because $B_{2g+n}\cong \langle x_i ~|~ \Sigma_i=Id, ~~1\le i\le 2g+n-1\rangle$ so that 
the ideal $\mathcal{I}_b\subset \mathbb{A}(\mathbf{x}, S_{g,n})$ is generated by the relation $r(b)=Id$,  one 
gets from (\ref{eq3.2}) a faithful  representation
%**************************************************************************
\begin{equation}
R: ~\pi_1(\mathscr{L}_b)\to \mathbb{A}(\mathbf{x}, S_{g,n}) / \mathcal{I}_b.
\end{equation}
%************************************************************************

\bigskip
(ii) Let us show that the quotient  $\mathbb{A}_b= \mathbb{A}(\mathbf{x}, S_{g,n}) / \mathcal{I}_b$ 
is a stationary AF-algebra of rank $6g-6+2n$.  

First, let us show that the ideal $\mathcal{I}_b\subset \mathbb{A}(\mathbf{x}, S_{g,n})$ is self-adjoint,
i.e. $\mathcal{I}_b^*\cong \mathcal{I}_b$.  Indeed, the generating relation $r(b)=Id$ for such an ideal
is invariant under $\ast$-involution.  To prove the claim, we follow the argument and notation of 
\cite[Remark 4]{Nik2}. 
Namely, from Theorem \ref{thm2.2} the relation $r(b)=Id$ has  the form $(e_1+1)^{k_1}\dots (e_{n-1}+1)^{k_{n-1}}=1$,
where $k_i\in\mathbf{Z}$ and $e_i$ are projections.  Using the braid relations, one can  write the product at the LHS 
in the form $\sum_{i=1}^{|\mathcal{E}|} a_i\varepsilon_i$, where $a_i\in\mathbf{Z}$ and $\varepsilon_i$ are elements 
of a finite multiplicatively closed set $\mathcal{E}$.  Moreover, each $\varepsilon_i$ is (the Murray-von Neumann equivalent to)
a projection. In other words, $\varepsilon_i^*=\varepsilon_i$ and  
$(\sum_{i=1}^{|\mathcal{E}|} a_i\varepsilon_i)^*=\sum_{i=1}^{|\mathcal{E}|} a_i\varepsilon_i$.  
We conclude that the relation $r(b)=Id$ is invariant under $\ast$-involution. Therefore, we have 
$\mathcal{I}_b^*\cong \mathcal{I}_b$.

Recall that the quotient of the cluster $C^*$-algebra $\mathbb{A}(\mathbf{x}, S_{g,n})$ by a self-adjoint (primitive) 
ideal is a simple AF-algebra of rank $6g-6+2n$ \cite[Theorem 2]{Nik1}.  Let us show that 
the quotient   $\mathbb{A}(\mathbf{x}, S_{g,n}) / \mathcal{I}_b$ is a stationary  AF-algebra.

Consider an inner automorphism $\varphi_b$ of the group $B_{2g+n}$ given by the formula $x\mapsto b^{-1}xb$. 
Using the representation $\rho$ of Theorem \ref{thm2.2},  one  can extend the $\varphi_b$ to an automorphism of 
the algebra $\mathbb{A}(\mathbf{x}, S_{g,n})$.  Since the braid $b$ is a fixed point of $\varphi_b$,  we conclude that
$\varphi_b$ induces a non-trivial automorphism of the AF-algebra   $\mathbb{A}_b:=\mathbb{A}(\mathbf{x}, S_{g,n}) / \mathcal{I}_b$.
But each simple AF-algebra with a non-trivial group of automorphisms 
must be a stationary AF-algebra [Effros 1981]  \cite[Chapter 5]{E}.  Lemma \ref{lm3.1} follows. 
\end{proof}

%*****************************************************
\begin{lemma}\label{lm3.2}
There is  a one-to-one  correspondence between
normal subgroups of the group $\pi_1(\mathscr{L}_b)$ and
AF-subalgebras of the algebra    $\mathbb{A}_b$.  
\end{lemma}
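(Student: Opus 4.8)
The correspondence should be engineered through the $K_0$-functor, using the fact that $\mathbb{A}_b$ is a stationary AF-algebra so that $K_0(\mathbb{A}_b)\cong O_K$ carries the order structure of a Dedekind ring of integers. First I would recall the standard dictionary for AF-algebras: by Bratteli's theorem (see \cite[Chapter 3]{E}), the closed two-sided ideals of an AF-algebra $\mathbb{A}_b$ are in bijection with the order ideals of the dimension group $(K_0(\mathbb{A}_b),K_0^+(\mathbb{A}_b))$, and more generally the AF-subalgebras that arise as ranges of conditional expectations (equivalently, as $K_0$-subgroups that are directed and hereditary) correspond to subgroups of $K_0(\mathbb{A}_b)$ compatible with the positive cone. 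So the first reduction is: AF-subalgebras of $\mathbb{A}_b$ $\longleftrightarrow$ appropriate subgroups of $O_K$.

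\textbf{Key steps.} Second, I would invoke the faithful representation $R\colon\pi_1(\mathscr{L}_b)\hookrightarrow\mathbb{A}_b$ of Lemma \ref{lm3.1}. Because $\varphi_b$ acts on $\mathbb{A}_b$ and $b$ is a fixed point, the image $R(\pi_1(\mathscr{L}_b))$ generates $\mathbb{A}_b$ as a $C^*$-algebra (this uses Theorem \ref{thm2.2}: the $e_i+1$ generate $\mathbb{A}(\mathbf{x},S_{g,n})$, and their images generate the quotient). Then a normal subgroup $N\trianglelefteq\pi_1(\mathscr{L}_b)$ generates, via $R$, a $C^*$-subalgebra $\mathbb{A}_N\subseteq\mathbb{A}_b$; normality ensures $\mathbb{A}_N$ is invariant under the inner automorphisms coming from $R(\pi_1(\mathscr{L}_b))$, hence under the AF-structure maps, so $\mathbb{A}_N$ is itself an AF-subalgebra. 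Conversely, given an AF-subalgebra $\mathbb{B}\subseteq\mathbb{A}_b$, one recovers $N:=R^{-1}(\mathbb{B}\cap R(\pi_1(\mathscr{L}_b)))$, which is normal because $\mathbb{B}$ is invariant under conjugation by the generating unitaries. Third, I would check the two composites are the identity: $N\mapsto\mathbb{A}_N\mapsto R^{-1}(\mathbb{A}_N\cap R(\pi_1))$ gives back $N$ since $R$ is faithful (Lemma \ref{lm3.1}), and $\mathbb{B}\mapsto N\mapsto\mathbb{A}_N$ gives back $\mathbb{B}$ because $R(\pi_1(\mathscr{L}_b))$ generates $\mathbb{A}_b$, so a subalgebra is determined by its intersection with the generating set. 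Finally, I would note the bijection matches the Galois-theoretic picture: under $K_0$, normal subgroups of finite index correspond (via the covering-space dictionary of Section 2.1 and the ring-of-integers dictionary of Section 2.2) to subrings $O_L\subseteq O_K$ with $L/\mathbf{Q}$ the corresponding subextension.

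\textbf{Main obstacle.} The delicate point is that an arbitrary $C^*$-subalgebra of $\mathbb{A}_b$ generated by a subgroup need not a priori be AF: one must show the norm closure of $\mathrm{span}\,R(N)$ respects the finite-dimensional filtration of $\mathbb{A}_b$. This is where normality of $N$ is essential — it makes the subalgebra globally invariant under the inner automorphisms implementing the Bratteli embeddings $M_k\hookrightarrow M_{k+1}$, and an invariant $C^*$-subalgebra of an AF-algebra under such automorphisms is again AF by a hereditary-subalgebra argument (or by passing to $K_0$ and using that a subgroup of a dimension group inheriting the Riesz property is again a dimension group, \cite[Theorem 3.1]{E}). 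I expect the verification that ``$\mathbb{B}$ is determined by $\mathbb{B}\cap R(\pi_1(\mathscr{L}_b))$'' — i.e. that the generating property survives in all AF-subalgebras — to require the most care, and it is precisely here that I would use the stationarity of $\mathbb{A}_b$ together with the explicit form $\sigma_i\mapsto e_i+1$ of the representation.
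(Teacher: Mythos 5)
Your route diverges from the paper's, and it breaks precisely at the points of divergence. The paper does not argue via invariance under inner automorphisms at all: it takes a normal subgroup $G$, forms the group ring $\mathbf{C}[G]\subset\mathbf{C}[\pi_1(\mathscr{L}_b)]$, closes it inside $\mathbb{A}_b$ to obtain $\mathbb{A}_G$, and then uses the standing hypothesis from Section 2.1 that only Galois coverings with $|\pi_1(\mathscr{L}_b)/G|<\infty$ are considered: intersecting with the filtration $M_k(\mathbf{C})$ of $\mathbb{A}_b$ gives exact sequences as in (\ref{eq3.4})--(\ref{eq3.5}) with $M_k'=M_k(\mathbf{C})\cap\mathbb{A}_G$, and finiteness of the quotient group forces each $M_k'$ to be finite dimensional, whence $\mathbb{A}_G=\lim_{k\to\infty}M_k'$ is AF. Your replacement argument --- normality makes $\mathbb{A}_N$ invariant under the inner automorphisms coming from $R(\pi_1(\mathscr{L}_b))$, ``hence under the AF-structure maps,'' hence AF --- does not go through: the Bratteli embeddings $M_k\hookrightarrow M_{k+1}$ are not implemented by the unitaries $R(x_i)=e_i+1$, and invariance of a $C^*$-subalgebra under a family of inner automorphisms gives no compatibility with the finite-dimensional filtration. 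Some finiteness input is indispensable here (a $C^*$-subalgebra of an AF-algebra need not be AF; for instance irrational rotation algebras embed into AF-algebras), and the finite-index hypothesis, which you never invoke, is exactly what the paper's argument hinges on.

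The converse half of your bijection is also unjustified. For an arbitrary AF-subalgebra $\mathbb{B}\subseteq\mathbb{A}_b$ there is no reason it should be invariant under conjugation by the generating unitaries, so normality of $N:=R^{-1}\bigl(\mathbb{B}\cap R(\pi_1(\mathscr{L}_b))\bigr)$ does not follow; and the claim that $\mathbb{B}$ is determined by $\mathbb{B}\cap R(\pi_1(\mathscr{L}_b))$ is false in general, since an AF-subalgebra can meet the image of the group only in the unit while still being large. (Your opening reduction via the ideal/dimension-group dictionary of \cite{E} is also never actually used, and it concerns closed two-sided ideals rather than general AF-subalgebras.) To be fair, the paper's own proof establishes only the direction ``normal subgroup $\Rightarrow$ AF-subalgebra'' and is silent about injectivity and surjectivity of the correspondence, so you were right to feel that an inverse map needs to be constructed; but the inverse you propose, resting on invariance and on recovery of a subalgebra from its group elements, fails as stated. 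A workable version would restrict attention to subalgebras of the form $\overline{\mathbf{C}[G]}$ with $G$ normal of finite index and recover $G$ using the faithfulness of $R$ from lemma \ref{lm3.1}, rather than quantifying over all AF-subalgebras of $\mathbb{A}_b$.
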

%******************************************************
\begin{proof}
Consider a representation $R: \pi_1(\mathscr{L}_b)\to \mathbb{A}_b$
constructed in lemma \ref{lm3.1}. 
The algebra $\mathbb{A}_b$ is a closure in the norm topology of a self-adjoint
representation of the group ring $\mathbf{C}[\pi_1(\mathscr{L}_b)]$ by bounded
linear operators acting on a Hilbert space. Namely, such a representation is given
by the formula $x_i\mapsto e_i+1$, where $x_i$ is a generator of the group $\pi_1(\mathscr{L}_b)$
and $e_i$ is a projection in the algebra $\mathbb{A}_b$. 

Let $G$ be a subgroup of  $\pi_1(\mathscr{L}_b)$. The $\mathbf{C}[G]$ is a subring 
of the group ring $\mathbf{C}[\pi_1(\mathscr{L}_b)]$. Taking the closure of a self-adjoint 
representation of $\mathbf{C}[G]$, one gets a $C^*$-subalgebra $\mathbb{A}_G$ of the algebra  $\mathbb{A}_b$. 

Let us show that if $G$ is a normal subgroup, then the $\mathbb{A}_G$ is an AF-algebra. 
Indeed, if $G$ is a normal subgroup one gets an exact sequence:
%*******************************************************************************
\begin{equation}\label{eq3.4}
0\to \mathbb{A}_G\to \mathbb{A}_b\to \pi_1(\mathscr{L}_b)/G\to 0,
\end{equation}
%*******************************************************************************
where $\pi_1(\mathscr{L}_b)/G$ is a finite group.  Let $\{M_k(\mathbf{C})\}_{k=1}^{\infty}$ 
be an ascending sequence of the finite-dimensional $C^*$-algebras, such that 
$\mathbb{A}_b=\lim_{k\to\infty} M_k(\mathbf{C})$.  Let $M_k'(\mathbf{C})=M_k(\mathbf{C})\cap \mathbb{A}_G$.
From (\ref{eq3.4}) we obtain an exact sequence:
%*******************************************************************************
\begin{equation}\label{eq3.5}
0\to M_k'(\mathbf{C})\to M_k(\mathbf{C})\to \pi_1(\mathscr{L}_b)/G\to 0. 
\end{equation}
%*******************************************************************************
Since  $|\pi_1(\mathscr{L}_b)/G|<\infty$, the $M_k'(\mathbf{C})$ is a finite-dimensional 
$C^*$-algebra. Thus $\mathbb{A}_G=\lim_{k\to\infty} M_k'(\mathbf{C})$, i.e. 
the $\mathbb{A}_G$ is an AF-algebra. Lemma \ref{lm3.2} follows.
\end{proof}

%**********************************************************************
\begin{remark}\label{rmk3.3}
The $\mathbb{A}_G$ is a stationary AF-algebra, since it is an AF-subalgebra of a stationary 
AF-algebra  [Effros 1981]  \cite[Chapter 5]{E}.   
\end{remark}
%**********************************************************************
%*****************************************************
\begin{corollary}\label{cor3.3}
There is  a one-to-one  correspondence between
normal subgroups of the group $\pi_1(\mathscr{L}_b)$
and  ideals in  the ring of integers $O_K\cong K_0(\mathbb{A}_b)$ 
of a number field $K$, where $\deg~(K|\mathbf{Q})=6g-6+2n$. 
\end{corollary}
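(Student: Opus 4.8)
The plan is to combine Lemma \ref{lm3.2} with the arithmetic of stationary AF-algebras recalled in Section 2.2. By Lemma \ref{lm3.2} there is a bijection between normal subgroups $G\trianglelefteq\pi_1(\mathscr{L}_b)$ and AF-subalgebras $\mathbb{A}_G\subseteq\mathbb{A}_b$, and by Remark \ref{rmk3.3} each such $\mathbb{A}_G$ is again a stationary AF-algebra. First I would apply the functor $K_0$ to the inclusion $\mathbb{A}_G\hookrightarrow\mathbb{A}_b$. Since $K_0$ is functorial and preserves the order structure, this yields an injection of dimension groups $K_0(\mathbb{A}_G)\hookrightarrow K_0(\mathbb{A}_b)\cong O_K$. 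Because $\mathbb{A}_G$ is stationary, its dimension group has the form $(O_{K'},O_{K'}^+,1)$ for a number field $K'$; the image of $K_0(\mathbb{A}_G)$ in $O_K$ is then an $O_K$-submodule, i.e. a (fractional) ideal of $O_K$, and conversely every ideal $I\subseteq O_K$ arises this way as the image of a stationary AF-subalgebra. This step uses only the structure theory of stationary AF-algebras quoted from \cite[Chapter 5]{E} together with Lemma \ref{lm3.2}.

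Next I would verify that the degree assertion $\deg(K\mid\mathbf{Q})=6g-6+2n$ is exactly what Lemma \ref{lm3.1} already provides: $\mathbb{A}_b$ is a stationary AF-algebra of rank $6g-6+2n$, and by the last sentence of Section 2.2 the degree of $O_K$ over $\mathbf{Q}$ equals the rank of the stationary AF-algebra, whence the claim. Thus the correspondence $G\mapsto K_0(\mathbb{A}_G)$ is the map we want, and the content of the corollary is precisely the identification of AF-subalgebras with $K_0$-submodules of $O_K$, i.e. ideals. I would state explicitly that surjectivity onto the set of ideals follows because every ideal $I\subseteq O_K$, being a finitely generated subgroup closed under multiplication by $O_K$, is the dimension group of a stationary AF-subalgebra of $\mathbb{A}_b$ (this uses that $O_K$-submodules of $O_K$ are in bijection with the dimension subgroups, a standard fact for stationary systems), and that such a subalgebra is of the form $\mathbb{A}_G$ by Lemma \ref{lm3.2}.

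The main obstacle is the precise matching between \emph{AF-subalgebras} of $\mathbb{A}_b$ and \emph{ideals} of $O_K=K_0(\mathbb{A}_b)$: a priori $K_0$ of an AF-subalgebra is only an ordered subgroup of $O_K$, and one must argue that the stationary hypothesis (Remark \ref{rmk3.3}) forces this subgroup to be closed under multiplication by $O_K$, hence an ideal, and that conversely every ideal is realized. I would handle this by appealing to the fact that for a stationary AF-algebra with Perron--Frobenius matrix $A$, the $K_0$-group is $\varinjlim(\mathbf{Z}^r\xrightarrow{A}\mathbf{Z}^r\xrightarrow{A}\cdots)$ and the $O_K$-module structure comes from the action of the companion matrix of the characteristic polynomial of $A$; an AF-subalgebra corresponds to an $A$-invariant sublattice, which is exactly an $O_K$-submodule of the limit, i.e. an ideal. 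Once this dictionary is in place the corollary is immediate, so I would keep the proof short: cite Lemma \ref{lm3.2}, Remark \ref{rmk3.3}, apply $K_0$, and invoke the stationary structure theory to conclude that subalgebras $\leftrightarrow$ ideals and that the degree is the rank.

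\begin{proof}
By Lemma \ref{lm3.2} the normal subgroups $G\trianglelefteq\pi_1(\mathscr{L}_b)$ are in one-to-one correspondence with the AF-subalgebras $\mathbb{A}_G\subseteq\mathbb{A}_b$, and by Remark \ref{rmk3.3} each $\mathbb{A}_G$ is a stationary AF-algebra. Applying the functor $K_0$ to the inclusion $\mathbb{A}_G\hookrightarrow\mathbb{A}_b$ gives an order-embedding of dimension groups $K_0(\mathbb{A}_G)\hookrightarrow K_0(\mathbb{A}_b)\cong O_K$. Writing $\mathbb{A}_b$ as a stationary system with Perron--Frobenius matrix $A$, one has $K_0(\mathbb{A}_b)=\varinjlim(\mathbf{Z}^r\xrightarrow{A}\mathbf{Z}^r\to\cdots)$ with $r=6g-6+2n$, and the ring structure of $O_K$ on this limit is induced by $A$; an AF-subalgebra corresponds to an $A$-invariant sublattice, i.e. precisely to an $O_K$-submodule of $O_K$, that is, an ideal $I\subseteq O_K$. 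Conversely every ideal $I\subseteq O_K$ is such an $A$-invariant sublattice and hence equals $K_0(\mathbb{A}_G)$ for a unique $\mathbb{A}_G$, i.e. for a unique normal subgroup $G$. Finally, by Section 2.2 the degree of $O_K$ over $\mathbf{Q}$ equals the rank of the stationary AF-algebra $\mathbb{A}_b$, so $\deg(K\mid\mathbf{Q})=6g-6+2n$. Corollary \ref{cor3.3} follows.
\end{proof}
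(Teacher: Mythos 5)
Your overall skeleton matches the paper's: both arguments feed Lemma \ref{lm3.2} and Remark \ref{rmk3.3} through the functor $K_0$, and both get $\deg(K|\mathbf{Q})=6g-6+2n$ from the rank statement of Lemma \ref{lm3.1} together with the rank-equals-degree fact in Section 2.2. Where you genuinely diverge is the step turning the subgroup $K_0(\mathbb{A}_G)\subset K_0(\mathbb{A}_b)\cong O_K$ into an \emph{ideal}. The paper does this by citing Handelman's classification of stationary dimension groups by triples $(\Lambda,[I],i)$ --- an order $\Lambda\subseteq O_K$, an ideal class, and an embedding class --- then explicitly assuming $\Lambda\cong O_K$, and finally arguing that $K_0(\mathbb{A}_G)$ is an ideal because, via the exact sequence (\ref{eq3.4}), it is the kernel of a homomorphism. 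You instead invoke a dictionary ``AF-subalgebra $\leftrightarrow$ $A$-invariant sublattice $\leftrightarrow$ $O_K$-submodule''. Two cautions there. First, an $A$-invariant sublattice is a priori only a module over the order $\mathbf{Z}[A]$, which may be strictly smaller than the maximal order $O_K$; the discrepancy between an order and $O_K$ is exactly what Handelman's triples keep track of, and the paper disposes of it by the explicit simplifying assumption $\Lambda\cong O_K$, which you should state rather than leave implicit in the phrase ``the $O_K$-module structure comes from $A$''. Second, you assume that an arbitrary AF-subalgebra $\mathbb{A}_G$ is compatible with the stationary inductive system (so that its $K_0$ really is an $A$-invariant sublattice) and that $K_0$ of the inclusion is an order-embedding; neither is automatic for a $C^*$-subalgebra and neither is argued --- the paper instead extracts the inclusion $K_0(\mathbb{A}_G)\subset O_K$ directly from (\ref{eq3.4}). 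With those two points either assumed (as the paper effectively does) or supplied, your argument reaches the same conclusion, and your appeal to Lemma \ref{lm3.2} for surjectivity (``every such subalgebra is an $\mathbb{A}_G$'') is the same brief move the paper makes when it says the rest follows from that lemma.
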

%******************************************************
\begin{proof}
A one-to-one correspondence between stationary AF-algebras and the rings of integers
in  number fields  has been established by  [Handelman 1981]  \cite{Han1}. 
Namely,  the dimension groups of stationary AF-algebras are one-to-one with the triples
$(\Lambda, [I], i)$, where $\Lambda\subset O_K$ is an order (i.e. a ring with the unit) in the number
field $K$,  $[I]$ is the equivalence class of ideals corresponding to $\Lambda$ and $i$ is the embedding 
class of the field $K$.  The degree of $K$ over $\mathbf{Q}$ is equal to the rank of stationary 
AF-algebra, i.e.  $\deg~(K|\mathbf{Q})=6g-6+2n$ by lemma \ref{lm3.1}.  

Assume for simplicity  that $\Lambda\cong O_K$, i.e. that $\Lambda$ is the maximal order in the field $K$.  
From (\ref{eq3.4}) we get an inclusion $K_0(\mathbb{A}_G)\subset K_0(\mathbb{A}_b)\cong O_K$. 
By remark \ref{rmk3.3},   the $K_0(\mathbb{A}_G)$ is an order in $O_K$.  
Since by (\ref{eq3.4}) the   $K_0(\mathbb{A}_G)$ is  the kernel of a homomorphism,   we conclude
that it  is an ideal in $O_K$. 
The rest of the proof follows from lemma \ref{lm3.2}.
Corollary \ref{cor3.3} is proved.
\end{proof}

%*****************************************************
\begin{lemma}\label{lm3.4}
Let $\mathscr{M}$  be a 3-dimensional manifold, such that $\pi_1(\mathscr{M})\cong\pi_1(\mathscr{L}_b)$
and let $O_K\cong K_0(\mathbb{A}_b)$.  
There is a one-to-one  correspondence between
the Galois coverings of $\mathscr{M}$ ramified over
a link $\mathscr{Z}\subset\mathscr{M}$ (a knot $\mathscr{K}\subset\mathscr{M}$, resp.)
and the ideals (the prime ideals, resp.) of the ring $O_K$. 
In other words, each link $\mathscr{Z}\hookrightarrow\mathscr{M}$ 
(each  knot $\mathscr{K}\hookrightarrow\mathscr{M}$, resp.)
corresponds to an ideal (a prime ideal, resp.)  of the ring $O_K$. 
\end{lemma}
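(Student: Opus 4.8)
The plan is to deduce the lemma from Corollary \ref{cor3.3} via the classical Galois correspondence for covering spaces, and then to carve out, on each side of that correspondence, the subclass singled out by the words ``knot'' and ``prime ideal''.

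First I would recall from Section 2.1 that for a path-connected, locally nice space $X$ the connected Galois coverings of $X$ are in bijection with the normal subgroups $G\trianglelefteq\pi_1(X)$, those with $|\pi_1(X)/G|<\infty$ corresponding to the finite coverings. I apply this with $X=\mathscr{M}\setminus\mathscr{Z}$, the complement of a link $\mathscr{Z}\subset\mathscr{M}$: a Galois covering $p\colon\mathscr{M}'\to\mathscr{M}$ ramified over $\mathscr{Z}$ restricts to an unramified Galois covering of $\mathscr{M}\setminus\mathscr{Z}$, and conversely every finite Galois covering of $\mathscr{M}\setminus\mathscr{Z}$ admits a Fox completion to a branched covering of $\mathscr{M}$ whose branch locus is a sublink of $\mathscr{Z}$. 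Taking $\mathscr{M}$ to be the Dehn surgery on the canonical link $\mathscr{L}_b\subset\mathscr{S}^3$ of the Remark and letting $\mathscr{Z}$ run through the cores of the surgery tori, one has $\mathscr{M}\setminus\mathscr{Z}\cong\mathscr{S}^3\setminus\mathscr{L}_b$, hence $\pi_1(\mathscr{M}\setminus\mathscr{Z})\cong\pi_1(\mathscr{L}_b)$, the group appearing in the hypothesis. Thus the Galois coverings of $\mathscr{M}$ ramified over a link are in one-to-one correspondence with the (finite-index) normal subgroups of $\pi_1(\mathscr{L}_b)$.

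Next I would invoke Corollary \ref{cor3.3}: the normal subgroups of $\pi_1(\mathscr{L}_b)$ are in one-to-one correspondence with the ideals of $O_K\cong K_0(\mathbb{A}_b)$ (taking $\Lambda\cong O_K$, the maximal order, as in the proof of that corollary). Composing the two bijections yields the ``link $\leftrightarrow$ ideal'' half of the lemma: every link $\mathscr{Z}\hookrightarrow\mathscr{M}$, equivalently every Galois covering of $\mathscr{M}$ ramified over it, is matched with an ideal $I\subseteq O_K$, and vice versa.

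It remains to identify the ``knot'' and ``prime'' subclasses, and this is the step I expect to be the main obstacle. Arithmetically, an ideal $I$ is prime exactly when the residue ring $O_K/I$ is a field; by the exact sequence (\ref{eq3.4}) this ring is the finite group $\pi_1(\mathscr{L}_b)/G$ carrying the quotient ring structure transported from $O_K\cong K_0(\mathbb{A}_b)$. Topologically, I claim the branch locus of the covering attached to $G$ is connected --- a knot --- precisely in that case, and I would prove it by comparing the factorization $I=\mathfrak{p}_1^{e_1}\cdots\mathfrak{p}_r^{e_r}$ into prime powers with the decomposition $\mathscr{Z}=\mathscr{K}_1\sqcup\cdots\sqcup\mathscr{K}_r$ of the branch link into its connected components: using that the correspondence of Corollary \ref{cor3.3} respects the lattice of normal subgroups, together with the fact that the abelianization of a link group on $r$ components is $\mathbf{Z}^r$ while that of a knot group is $\mathbf{Z}$, one matches $r$ distinct prime factors with $r$ distinct knot components, so that $I$ is prime $\iff\mathscr{Z}$ is a single knot $\mathscr{K}$. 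Assembling the two halves gives the lemma. The delicate point throughout is that the branched-covering/Fox-completion dictionary of the first step is compatible with the multiplicative structure used in the last step, so that unique factorization into prime ideals in $O_K$ is faithfully mirrored by the splitting of a link into knots; verifying this compatibility is where the real work lies.
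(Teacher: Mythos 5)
The first half of your argument (branched coverings corresponding to normal subgroups via restriction to the complement and Fox completion, then Corollary \ref{cor3.3} to pass to ideals) is broadly in the spirit of the paper, but the step you yourself flag as ``where the real work lies'' is a genuine gap, and it is exactly the content of the lemma. The correspondence of Corollary \ref{cor3.3} is produced from inclusions of AF-subalgebras and their $K_0$-groups; it is purely order-theoretic (nested normal subgroups give nested ideals) and carries no multiplicative structure. So the assertion that it ``respects the lattice of normal subgroups'' in a way that matches the factorization $I=\mathfrak{p}_1^{e_1}\cdots\mathfrak{p}_r^{e_r}$ with the decomposition of $\mathscr{Z}$ into $r$ knot components cannot simply be invoked: nothing in the paper or in your sketch establishes any ring-theoretic (multiplicative) compatibility, and the abelianization count $\mathbf{Z}^r$ versus $\mathbf{Z}$ by itself says nothing about primality of the ideal attached to a normal subgroup. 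Likewise your identification of $O_K/I$ with $\pi_1(\mathscr{L}_b)/G$ ``with transported ring structure'' is unsupported: (\ref{eq3.4}) is a sequence of algebras, and no ring structure on that quotient group is ever defined, so the criterion ``$I$ prime iff $O_K/I$ is a field'' has nothing to act on.

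The paper's route is genuinely different and avoids factorization altogether: given a $k$-component link $\mathscr{Z}\subset\mathscr{M}$ it peels off one component at a time, taking an iterated tower of Galois coverings each ramified over a single component with the remaining components fixed by the deck transformations; this yields the chain of normal subgroups (\ref{eq3.8}) of $\pi_1(\mathscr{M})\cong\pi_1(\mathscr{L}_b)$ and, via Corollary \ref{cor3.3}, the chain of ideals (\ref{eq3.9}). For $k=1$ (a knot) the chain has length one and the resulting ideal is maximal, hence prime because $O_K$ is a Dedekind domain; for $k\ge 2$ the bottom ideal is strictly contained in a proper ideal, hence not maximal. Your single-covering setup also has a scope problem relative to the statement: you identify $\pi_1(\mathscr{M}\setminus\mathscr{Z})$ with $\pi_1(\mathscr{L}_b)$ only for the particular link of surgery cores, whereas the lemma asserts a correspondence for every link $\mathscr{Z}\hookrightarrow\mathscr{M}$; Corollary \ref{cor3.3} applies to normal subgroups of $\pi_1(\mathscr{L}_b)\cong\pi_1(\mathscr{M})$, not to subgroups of the complement group of an arbitrary $\mathscr{Z}$, so for general $\mathscr{Z}$ your first bijection never connects to the corollary. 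To repair the proposal you would either have to restrict to the paper's tower construction or supply the missing multiplicative compatibility, which is nowhere available in the paper's framework.
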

%******************************************************
\begin{proof}
Let 
%***************************************************************************
\begin{equation}
\mathscr{Z}\cong \underbrace{S^1\cup S^1\cup\dots\cup S^1}_k
\end{equation}
%***************************************************************************
and let $\mathscr{Z}\hookrightarrow\mathscr{M}$  be an embedding of link $\mathscr{Z}$ into
a 3-dimensional manifold $\mathscr{M}$. 
Let $\mathscr{M}_1$ be the Galois covering of $\mathscr{M}$ ramified over the first 
component $S^1$ of the link $\mathscr{Z}$ and such that the deck transformations fix the remaining 
components of $\mathscr{Z}$.  Let 
%***************************************************************************
\begin{equation}
\mathscr{Z}_1\cong \underbrace{S^1\cup S^1\cup\dots\cup S^1}_{k-1}
\end{equation}
%***************************************************************************
and let $\mathscr{Z}_1\hookrightarrow\mathscr{M}_1$  be an embedding of link $\mathscr{Z}_1$ into
 $\mathscr{M}_1$. Denote by $G_1$ a normal subgroup of $\pi_1(\mathscr{M})$ corresponding to the
 Galois covering $\mathscr{M}_1$. 
 
 Let $\mathscr{M}_2$ be the Galois covering of  $\mathscr{M}_1$ ramified over the first component of the 
 link $\mathscr{Z}_1$ such that  the remaining components are fixed by the corresponding deck transformations. 
 We  denote  by $G_2 \unlhd G_1$ a normal subgroup of $G_1$ corresponding to the Galois covering $\mathscr{M}_2$.  

Proceeding  by the induction, one gets  the following  lattice of the normal subgroups:
%***************************************************************************
\begin{equation}\label{eq3.8}
G_k \unlhd G_{k-1}  \unlhd \dots  \unlhd G_1 \unlhd \pi_1(\mathscr{M}). 
\end{equation}
%***************************************************************************
 By  corollary \ref{cor3.3},  the normal subgroups (\ref{eq3.8}) correspond to a 
 chain  of  ideals of the ring $O_K$:
%***************************************************************************
\begin{equation}\label{eq3.9}
I_k \subset I_{k-1}  \subset \dots  \subset I_1 \subset  O_K. 
\end{equation}
%*************************************************************************** 

\medskip
(i)  Suppose  that $k\ge 2$.   In this case the link  $\mathscr{Z}$ has at least two components, 
 and  i.e.  $\mathscr{Z}$ is distinct from a knot.   In view of (\ref{eq3.9}), the ideal $I_k$ cannot be a maximal ideal of the ring $O_K$, 
since $I_k\subset I_{k-1}\subset O_K$.

\medskip
(ii) Suppose that $k=1$.  In this case $\mathscr{Z}\cong \mathscr{K}$ is a knot. From (\ref{eq3.9}) the ideal $I_k$ is the
maximal ideal of the ring $O_K$.  Since  $O_K$ is the Dedekind domain, we conclude that $I_k$ is a prime ideal. 
This argument finishes the proof of lemma \ref{lm3.4}.
\end{proof}

\bigskip
Items (ii) and (iii) of theorem \ref{thm1.1} follow from lemma \ref{lm3.4}.

\bigskip
To prove item (i) of theorem  \ref{thm1.1},  one needs  to show that  $\mathscr{M}\cong \mathscr{S}^3$
implies $O_K\cong\mathbf{Z}$.   Indeed, consider the Riemann surface $S_{0,1}$, i.e.  sphere with a cusp.
Since the fundamental group $\pi_1(S_{0,1})$ is trivial, the 3-dimensional sphere $\mathscr{S}^3$
is homeomorphic to the mapping torus $\mathscr{M}_{\phi}$ of  surface $S_{0,1}$
by an automorphism  $\phi: S_{0,1}\to S_{0,1}$,  i.e.  $\mathscr{S}^3\cong \mathscr{M}_{\phi}$.

On the other hand,  the $S_{0,1}$ is homeomorphic to the interior of a planar $d$-gon. Thus  the 
cluster algebra   $\mathscr{A}(\mathbf{x}, S_{0,1})$  is isomorphic to the algebra $\mathscr{A}_{d-3}$
of the triangulated $d$-gon for $d\ge 3$, see  [Williams 2014]  \cite[Example 2.2]{Wil1}. 
It is known that the $\mathscr{A}_{d-3}$ is a cluster algebra of finite type, i.e. has finitely many seeds,  {\it ibid.}
Therefore the corresponding cluster $C^*$-algebra must be finite-dimensional,  i.e.  
$\mathbb{A}(\mathbf{x}, S_{0,1})\cong M_n(\mathbf{C})$. 
But $K_0(M_n(\mathbf{C})\cong \mathbf{Z}$ [Effros 1981]  \cite{E}. 
We conclude therefore that the the homeomorphism $\mathscr{M}\cong \mathscr{S}^3$
implies an isomorphism $O_K\cong\mathbf{Z}$.  This argument finishes the proof of 
item (i) of theorem \ref{thm1.1}. 

\bigskip
Theorem \ref{thm1.1} follows.

%**************************************************************************
\section{Example}
%***************************************************************************
Let   $g=n=1$, i.e. surface $S_{g,n}$ is homeomorphic to   the torus with a cusp. 
 The  matrix $B$ associated  to an ideal triangulation of  surface $S_{1,1}$    has the form: 
 %*******************************************************************************
 \begin{equation}\label{eq4.1}
 B=\left(
 \begin{matrix}
 0 & 2 & -2\cr
              -2 & 0 & 2\cr
               2 & -2 & 0
 \end{matrix}
              \right)
 \end{equation}
 %*******************************************************************************
[Fomin,  Shapiro  \& Thurston  2008]  \cite[Example 4.6]{FoShaThu1}. 
The  cluster $C^*$-algebra $\mathbb{A}(\mathbf{x}, S_{1,1})$ is given by the 
Bratteli diagram shown in Figure 1. 
Theorem \ref{thm2.2}  says that there exists a faithful representation of the braid group $B_3$:   
%**************************************************************************
\begin{equation}\label{eq32}
 \rho: B_3\to \mathbb{A}(\mathbf{x}, S_{1,1}).
 \end{equation}
%***************************************************************************  
For every $b\in B_3$ the inner automorphism $\varphi_b: ~x\mapsto b^{-1}xb$ of $B_3$ defines a unique 
automorphism of the algebra $\mathbb{A}(\mathbf{x}, S_{1,1})$.
Since  the algebra $\mathbb{A}(\mathbf{x}, S_{1,1})$ is a coordinate ring
of the Teichm\"uller space $T_{1,1}$ and $Aut~(T_{1,1})\cong SL_2(\mathbf{Z})$,
we conclude that $\varphi_b$  corresponds  to an element of the modular group $SL_2(\mathbf{Z})$.

An explicit formula for the correspondence $b\mapsto \varphi_b$ is well known.
Namely, if $\sigma_1$ and $\sigma_2$ are the standard generators of the braid group
$B_3\cong \langle \sigma_1, \sigma_2 ~|~ \sigma_1\sigma_2\sigma_1=\sigma_2\sigma_1\sigma_2\rangle$
then the formula
%******************************************************************************************
\begin{equation}
\sigma_1\mapsto\left(
\begin{matrix}
1 & 1\\
0 & 1
\end{matrix}
\right), 
\qquad 
\sigma_2\mapsto\left(
\begin{matrix}
1 & 0\\
-1 & 1
\end{matrix}
\right) 
\end{equation}
%*************************************************************************************** 
defines a surjective homomorphism $B_3\to SL_2(\mathbf{Z})$.

%*******************************************************************
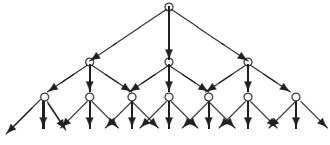
\begin{figure}
%[here]
\begin{picture}(100,100)(0,130)

\put(50,200){\circle{3}}
\put(50,179){\circle{3}}
\put(20,179){\circle{3}}
\put(80,179){\circle{3}}

%%%% level 3 %%%%%%%%%%%%%%

\put(3,166){\circle{3}}
\put(20,166){\circle{3}}
\put(36,166){\circle{3}}
\put(50,166){\circle{3}}
\put(65,166){\circle{3}}

\put(80,166){\circle{3}}
\put(98,166){\circle{3}}

%***************************************************

\put(49,199){\vector(-3,-2){29}}
\put(51,199){\vector(3,-2){29}}
\put(50,200){\vector(0,-1){20}}

%%%%%%%  level 3  %%%%%%%%%%%%%%%%

%%%%    left      %%%%%%%
\put(19,178){\vector(-3,-2){15}}
\put(21,178){\vector(3,-2){15}}
\put(20,178){\vector(0,-1){10}}

%%%% center   %%%%%%%%%

\put(50,178){\vector(-3,-2){15}}
\put(50,178){\vector(3,-2){15}}
\put(50,178){\vector(0,-1){10}}

%%%%  right %%%%%%%%%%%

\put(79,178){\vector(-3,-2){15}}
\put(81,178){\vector(3,-2){15}}
\put(80,178){\vector(0,-1){10}}

%%%%%%%%  level 4  %%%%%%%%%%%%%%%%%%

\put(1.5,165){\vector(-1,-1){13}}
\put(4,164){\vector(2,-3){7}}
\put(2.5,164){\vector(0,-1){10}}

\put(19,164){\vector(-1,-1){10}}
\put(21,164){\vector(1,-1){10}}
\put(20,164){\vector(0,-1){10}}

\put(49,164){\vector(-1,-1){10}}
\put(51,164){\vector(1,-1){10}}
\put(50,164){\vector(0,-1){10}}

\put(79,164){\vector(-1,-1){10}}
\put(81,164){\vector(1,-1){10}}
\put(80,164){\vector(0,-1){10}}

\put(98,164){\vector(-1,-1){10}}
\put(100,164){\vector(1,-1){10}}
\put(98,164){\vector(0,-1){10}}

%*********   special  ***************************

\put(65,164){\vector(-1,-1){10}}
\put(65,164){\vector(1,-1){10}}
\put(65,164){\vector(0,-1){10}}

\put(36,164){\vector(-1,-1){10}}
\put(36,164){\vector(1,-1){10}}
\put(36,164){\vector(0,-1){10}}

%\put(148,30){$3$}

\end{picture}
\caption{Bratteli diagram of the algebra $\mathbb{A}(\mathbf{x}, S_{1,1})$.} 
\end{figure}
%*******************************************************************

%**********************************************************************************
\begin{example}
Let $\mathscr{L}_b$ be a link given by the closure of a braid of the form $b=\sigma_1^p\sigma_2^{-q}$,
where $p\ge 1$ and $q\ge 1$.  In this case 
%***********************************************************************************
\begin{equation}\label{eq4.4}
\varphi_b=
\left(
\begin{matrix}
pq+1 & p\\
q & 1
\end{matrix}
\right).  
\end{equation}
%**********************************************************************************************
The algebra $\mathbb{A}_b=\mathbb{A}(\mathbf{x}, S_{1,1})/\mathcal{I}_b$
is a stationary AF-algebra of rank $2$  given by the Bratteli diagram shown in Figure 2. 
(The numbers in Figure 2 show the multiplicity of the corresponding edge of the graph.) 
The Perron-Frobenius eigenvalue of the matrix  $\varphi_b$ is equal to
%***********************************************************************************
\begin{equation}
\lambda_{\varphi_b}={pq+2+\sqrt{pq(pq+4)}\over 2}.
\end{equation}
%**********************************************************************************************
Therefore $K_0(\mathbb{A}_b)\cong \mathbf{Z}+\mathbf{Z}\sqrt{D}$, where $D=pq(pq+4)$. 
The number  field $K$ corresponding to the link $\mathscr{L}_b$ is a real quadratic field of the form:
%***********************************************************************************
\begin{equation}
K\cong \mathbf{Q}\left(\sqrt{pq(pq+4)}\right).
\end{equation}
%**********************************************************************************************
Denote by $\mathscr{M}_{p,q}$  a 3-dimensional manifold, such that $\pi_1(\mathscr{M}_{p,q})
\cong\pi_1(\mathscr{L}_{\sigma_1^p\sigma_2^{-q}})$.      
The manifolds  $\mathscr{M}_{p,q}$ corresponding to the quadratic fields with a  small square-free discriminant $D$
are recorded below.
\end{example}
%*********************************************************************************
%************************************************************
\begin{table}[h]
\begin{tabular}{c|c}
\hline
&\\
Manifold  $\mathscr{M}_{p,q}$ & Number field $K=F(\mathscr{M}_{p,q})$\\
&\\
\hline
 $\mathscr{M}_{1,1}$ & $\mathbf{Q}(\sqrt{5})$ \\
\hline
 $\mathscr{M}_{1,3}$ & $\mathbf{Q}(\sqrt{21})$ \\
\hline
 $\mathscr{M}_{1,7}$ & $\mathbf{Q}(\sqrt{77})$ \\
\hline
 $\mathscr{M}_{1,11}$ & $\mathbf{Q}(\sqrt{165})$ \\
\hline
 $\mathscr{M}_{1,13}$ & $\mathbf{Q}(\sqrt{221})$ \\
\hline
 $\mathscr{M}_{3,5}$ & $\mathbf{Q}(\sqrt{285})$ \\
\hline
 $\mathscr{M}_{3,7}$ & $\mathbf{Q}(\sqrt{525})$ \\
\hline
 $\mathscr{M}_{3,11}$ & $\mathbf{Q}(\sqrt{1221})$ \\
\hline
\end{tabular}
\end{table}
%************************************************************
%\bigskip
%*******************************************************************
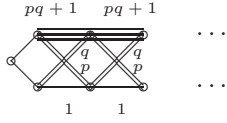
\begin{figure}%[here]
\begin{picture}(300,60)(0,0)
\put(110,30){\circle{3}}
\put(120,20){\circle{3}}
\put(140,20){\circle{3}}
\put(160,20){\circle{3}}
\put(120,40){\circle{3}}
\put(140,40){\circle{3}}
\put(160,40){\circle{3}}

\put(110,30){\line(1,1){10}}
\put(110,30){\line(1,-1){10}}
\put(120,42){\line(1,0){20}}
\put(120,40){\line(1,0){20}}
\put(120,38){\line(1,0){20}}
\put(120,41){\line(1,-1){20}}
\put(120,39){\line(1,-1){20}}
\put(120,19){\line(1,1){20}}
\put(120,21){\line(1,1){20}}
\put(140,42){\line(1,0){20}}
\put(140,40){\line(1,0){20}}
\put(140,38){\line(1,0){20}}
\put(140,41){\line(1,-1){20}}
\put(140,39){\line(1,-1){20}}
\put(140,19){\line(1,1){20}}
\put(140,21){\line(1,1){20}}
\put(120,20){\line(1,0){20}}
\put(140,20){\line(1,0){20}}

\put(180,20){$\dots$}
\put(180,40){$\dots$}

\put(115,48){\tiny $pq+1$}
\put(145,48){\tiny $pq+1$}

\put(130,10){\tiny $1$}
\put(150,10){\tiny $1$}

\put(136,26){\tiny $p$}
\put(156,26){\tiny $p$}

\put(136,32){\tiny $q$}
\put(156,32){\tiny $q$}

\end{picture}

\caption{Bratteli diagram of the algebra   $\mathbb{A}(\mathbf{x}, S_{1,1})/\mathcal{I}_b$.}
\end{figure}
%*******************************************************************
%**********************************************************************************
\begin{remark}
The manifold $\mathscr{M}_{p,q}$ can be realized as a torus bundle over the circle with the monodromy 
given by matrix (\ref{eq4.4}).  The arithmetic invariants of surface   bundles over the circle  were studied in \cite{Nik3}. 
\end{remark}
%*******************************************************************************

\bibliographystyle{amsplain}

%**********************************************************

\end{document}